\newtheorem{thm}{Theorem}
\newtheorem{lem}{Lemma}
\newtheorem{cor}{Corollary}
\newtheorem{rem}{Remark}
\begin{document}

\title{Generating functions of orthogonal polynomials in higher dimensions}

\author{Hendrik De Bie\\\small{e-mail: hendrik.debie@ugent.be} \and Dixan Pe\~na Pe\~na\\\small{e-mail: dpp@cage.ugent.be} \and Frank Sommen\\\small{e-mail: fs@cage.ugent.be}}

\date{\normalsize{Clifford Research Group, Department of Mathematical Analysis\\Faculty of Engineering and Architecture\\Ghent University\\Galglaan 2, 9000 Gent, Belgium}}

\maketitle

\begin{abstract}
\noindent In this paper two important classes of orthogonal polynomials in higher dimensions using the framework of Clifford analysis are considered, namely the Clifford-Hermite and the Clifford-Gegenbauer polynomials. For both classes an explicit generating function is obtained.\vspace{0.2cm}\\
\noindent\textit{Keywords}: Orthogonal polynomials; Cauchy-Kowalevski extension theorem; Fueter's theorem; Clifford-Hermite polynomials; Clifford-Gegen- bauer polynomials\vspace{0.1cm}\\
\textit{Mathematics Subject Classification}: 30G35, 33C45, 33C50.
\end{abstract}

\section{Introduction}

Clifford analysis is a refinement of harmonic analysis: it is concerned among others, with the study of functions in the kernel of the Dirac operator, a first order operator that squares to the Laplace operator. In its study, special sets of orthogonal polynomials play an important role, see e.g. \cite{DBR} for applications to generalized Fourier transforms and \cite{BDSS} for wavelet transforms. In particular the so-called Clifford-Hermite and Clifford-Gegenbauer polynomials \cite{S2} have received considerable attention. Our main aim in this paper is to find explicit generating functions for both sets of polynomials.

Let us start by introducing the necessary notations and definitions. We denote by $\mathbb{R}_{0,m}$ the real Clifford algebra generated by the standard basis $\{e_1,\ldots,e_m\}$ of the Euclidean space $\mathbb R^m$ (see \cite{Cl}). The multiplication in $\mathbb{R}_{0,m}$ is determined by the relations 
\begin{equation*}
e_je_k+e_ke_j=-2\delta_{jk},\quad j,k=1,\dots,m
\end{equation*}
and a general element $a\in\mathbb R_{0,m}$ may be written as
\[a=\sum_{A\subset M}a_Ae_A,\quad a_A\in\mathbb R,\]
where $e_A=e_{j_1}\dots e_{j_k}$ for $A=\{j_1,\dots,j_k\}\subset M=\{1,\dots,m\}$, with $j_1<\dots<j_k$. For the empty set one puts $e_{\emptyset}=1$, which is the identity element. 

An important part of Clifford analysis is the study of so-called monogenic functions (see e.g. \cite{BDS,DSS,GuSp}). They are defined as follows. A function $f:\Omega\rightarrow\mathbb{R}_{0,m}$ defined and continuously differentiable in an open set $\Omega$ in $\mathbb R^{m+1}$ (resp. $\mathbb R^m$), is said to be monogenic if
\[(\partial_{x_0}+\partial_{\underline x})f=0\quad(\text{resp.}\;\partial_{\underline x}f=0)\;\;\text{in}\;\;\Omega,\] 
where $\partial_{\underline x}=\sum_{j=1}^me_j\partial_{x_j}$ is the Dirac operator in $\mathbb R^m$. Note that the differential operator $\partial_{x_0}+\partial_{\underline x}$, called generalized Cauchy-Riemann operator, provides a factorization of the Laplacian, i.e.
\[\Delta=\sum_{j=0}^m\partial_{x_j}^2=(\partial_{x_0}+\partial_{\underline x})(\partial_{x_0}-\partial_{\underline x}).\]
Thus monogenic functions, like their lower dimensional counterpart (i.e. holomorphic functions), are harmonic.

In this paper we shall deal with two fundamental techniques to generate special monogenic functions: the Cauchy-Kowalevski extension theorem (see \cite{BDS,DSS,S}) and Fueter's theorem (see \cite{F,Q,Sce,S3}). 

The CK-extension theorem states that: \textit{Every $\mathbb R_{0,m}$-valued function $g(\underline x)$ analytic in the open set $\,\underline\Omega\subset\mathbb R^m$ has a unique monogenic extension given by
\begin{equation}\label{CKf}
\mathsf{CK}[g(\underline x)](x_0,\underline x)=\sum_{n=0}^\infty\frac{(-x_0)^n}{n!}\,\partial_{\underline x}^ng(\underline x),
\end{equation}
and defined in a normal open neighborhood $\Omega\subset\mathbb R^{m+1}$ of $\,\underline\Omega$.} 

Fueter's theorem discloses a remarkable connection existing between holomorphic functions and monogenic functions. It was first discovered by R. Fueter in the setting of quaternionic analysis (see \cite{F}) and later generalized to higher dimensions in \cite{Q,Sce,S3}. For further works on this topic we refer the reader to e.g. \cite{CoSaF,CoSaF2,KQS,D,DS1,DS2,DS3,QS}. 

Throughout the paper we assume $P_k(\underline x)$ to be a given arbitrary homogeneous monogenic polynomial of degree $k$ in $\mathbb R^m$. In this paper we are concerned with the following generalization of Fueter's theorem obtained in \cite{S3}.

\textit{Let $h(z)=u(x,y)+iv(x,y)$ be a holomorphic function in some open subset $\Xi$ of the upper half of the complex plane $\mathbb C$. Put $\underline\omega=\underline x/r$, with $r=\vert\underline x\vert$, $\underline x\in\mathbb R^m$. If $m$ is odd, then the function
\begin{equation*}
\mathsf{Ft}\left[h(z),P_k(\underline x)\right](x_0,\underline x)=\Delta^{k+\frac{m-1}{2}}\bigl[\bigl(u(x_0,r)+\underline\omega\,v(x_0,r)\bigr)P_k(\underline x)\bigr]
\end{equation*}
is monogenic in $\Omega=\{(x_0,\underline x)\in\mathbb R^{m+1}:\;(x_0,r)\in\Xi\}$.} 

Define now the following differential operators
\begin{align*}
D_+&=2\underline x-\partial_{\underline x}\\
 D_\alpha&=2(\alpha+1)\underline x-\left(1-\vert\underline x\vert^2\right)\partial_{\underline x},\quad\alpha\in\mathbb R.
 \end{align*}
We shall deal with the so-called Clifford-Hermite and Clifford-Gegenbauer polynomials, which are defined as
\begin{align*}
H_{n,m}(P_k)(\underline x)&=D_+^nP_k(\underline x),\\
C_{n,m}^{(\alpha)}(P_k)(\underline x)&=D_\alpha D_{\alpha+1}\cdots D_{\alpha+n-1}P_k(\underline x).
\end{align*}
It is easy to verify that 
\begin{align*}
H_{n,m}(P_k)(\underline x)&=H_{n,m,k}(\underline x)P_k(\underline x),\\
C_{n,m}^{(\alpha)}(P_k)(\underline x)&=C_{n,m,k}^{(\alpha)}(\underline x)P_k(\underline x),
\end{align*}
where $H_{n,m,k}(\underline x)$ and $C_{n,m,k}^{(\alpha)}(\underline x)$ are polynomials  with real coefficients in the vector variable $\underline x$ of degree $n$. Moreover, we have from \cite{DSS} (see also \cite{BieS}) that
\begin{align}
\label{ExplHerm}
\begin{split}
H_{2n,m,k}(\underline x)&=2^{2n}n!L_n^{\left(k+\frac{m}{2}-1\right)}\left(\vert\underline x\vert^2\right)\\
H_{2n+1,m,k}(\underline x)&=2^{2n+1}n!\underline x L_n^{\left(k+\frac{m}{2}\right)}\left(\vert\underline x\vert^2\right)
\end{split}
\end{align}
and
\begin{align}
\label{ExplGeg}
\begin{split}
C_{2n,m,k}^{(\alpha)}(\underline x)&=2^{2n}n!(\alpha+n+1)_nP_n^{\left(k+\frac{m}{2}-1,\alpha\right)}\left(1-2\vert\underline x\vert^2\right),\\
C_{2n+1,m,k}^{(\alpha)}(\underline x)&=2^{2n+1}n!(\alpha+n+1)_{n+1}\underline x P_n^{\left(k+\frac{m}{2},\alpha\right)}\left(1-2\vert\underline x\vert^2\right),
\end{split}
\end{align}
where $L_n^{(\alpha)}$ and $P_n^{(\alpha,\beta)}$ are the generalized Laguerre and Jacobi polynomials on the real line, respectively; and $(a)_n=a(a+1)\cdots(a+n-1)$ is the Pochhammer symbol. 

One can also define these polynomials using the CK-extension theorem (see e.g. \cite{DSS,S2}). Namely, if we consider the weight functions $\exp\left(-\vert\underline x\vert^2\right)P_k(\underline x)$ and $\left(1-\vert\underline x\vert^2\right)^\alpha P_k(\underline x)$, then it follows from (\ref{CKf}) that
\begin{align}
\mathsf{CK}\left[\exp\left(-\vert\underline x\vert^2\right)P_k(\underline x)\right](x_0,\underline x)&=\exp\left(-\vert\underline x\vert^2\right)\sum_{n=0}^\infty\frac{x_0^n}{n!}H_{n,m}(P_k)(\underline x),\label{CKHerm}\\
\mathsf{CK}\left[\left(1-\vert\underline x\vert^2\right)^\alpha P_k(\underline x)\right](x_0,\underline x)&=\sum_{n=0}^\infty\frac{x_0^n}{n!}\left(1-\vert\underline x\vert^2\right)^{\alpha-n}C_{n,m}^{(\alpha-n)}(P_k)(\underline x).\label{CKGegen}
\end{align}
The main goal of the present paper is to obtain closed formulae for these generating functions with the help of Fueter's theorem. 

The paper is organized as follows. In Section \ref{sect2} we present the results we need on Fueter's theorem. In Section \ref{sect3} we treat the Clifford-Hermite polynomials and in Section \ref{sect4} the Clifford-Gegenbauer polynomials. 

\section{Fueter's theorem}\label{sect2}

For any $\underline x\in\mathbb R^m$ we put $\underline\omega=\underline x/r$, with $r=\vert\underline x\vert$. Another way to prove Fueter's theorem was obtained in \cite{D}. It was based on the calculation of $\mathsf{Ft}\left[h(z),P_k(\underline x)\right]$ in explicit form, i.e.
\begin{multline}\label{goodidea}
\mathsf{Ft}\left[h(z),P_k(\underline x)\right](x_0,\underline x)=(2k+m-1)!!\\
\times\left(D_r\left(k+\frac{m-1}{2}\right)[u(x_0,r)]+\underline\omega\,D^r\left(k+\frac{m-1}{2}\right)[v(x_0,r)]\right)P_k(\underline x),
\end{multline}
where $n!!$ denotes the double factorial of $n$ and where $D_r(n)$, $D^r(n)$ are differential operators defined by
\begin{align*}
D_r(n)[u]&=\left(r^{-1}\partial_r\right)^nu\\
D^r(n)[v]&=\left(\partial_r\,r^{-1}\right)^nv.
\end{align*}
It may be proved by induction that 
\begin{equation}\label{id1}
D_r(n)[u]=\sum_{j=1}^na_{j,n}r^{j-2n}\partial_r^ju,
\end{equation}
where the integers $a_{j,n}$ satisfy: 
\begin{align*}
a_{1,n+1}&=-(2n-1)a_{1,n},\\
a_{j,n+1}&=a_{j-1,n}-(2n-j)a_{j,n},\quad j=2,\dots,n,\\
a_{n+1,n+1}&=a_{n,n}=1,\quad n\ge1.
\end{align*}
It turns out that 
\[a_{j,n}=(-1)^{n+j}\frac{(2n-j-1)!}{2^{n-j}(n-j)!(j-1)!}.\]
In a similar way, we can check that
\begin{equation}\label{id2}
D^r(n)[v]=\sum_{j=0}^nb_{j,n}r^{j-2n}\partial_r^jv,\quad b_{j,n}=a_{j+1,n+1}=(-1)^{n+j}\frac{(2n-j)!}{2^{n-j}(n-j)!j!}.
\end{equation}
It is worth pointing out that the integers $(-1)^{n+j}b_{j,n}$ are the coefficients of the Bessel polynomial of degree $n$ (see \cite{KF}). Other properties of these operators that we shall need are the following
\begin{align}
D_r(n)[fg]&=\sum_{j=0}^n\binom{n}{j}D_r(n-j)[f]D_r(j)[g],\label{lrD1}\\
D^r(n)[fg]&=\sum_{j=0}^n\binom{n}{j}D_r(n-j)[f]D^r(j)[g].\label{lrD2}
\end{align}
It is easily seen that $\mathsf{Ft}\left[h(z),P_k(\underline x)\right]$ defines an $\mathbb R$-linear operator, i.e. 
\[\mathsf{Ft}\left[c_1h_1(z)+c_2h_2(z),P_k(\underline x)\right]=c_1\mathsf{Ft}\left[h_1(z),P_k(\underline x)\right]+c_2\mathsf{Ft}\left[h_2(z),P_k(\underline x)\right],\]
for all $c_1,c_2\in\mathbb R$. Therefore, $\mathsf{Ft}\left[h(z),P_k(\underline x)\right]$ is characterized if we know the action of this operator on the functions $z^n$, $iz^n$, $z^{-n}$ and $iz^{-n}$. This is the subject of the following lemma.

\begin{lem}\label{fident}
For $n\in\mathbb N_0$ the following equalities hold
\[\mathsf{Ft}\left[z^n,P_k(\underline x)\right](x_0,\underline x)\\
=\left\{\begin{array}{ll}0,&n<2k+m-1\\c_{1,n}\mathsf{CK}\left[\underline x^{n-(2k+m-1)}P_k(\underline x)\right](x_0,\underline x),&n\ge2k+m-1,\end{array}\right.\]
\begin{multline*}
\mathsf{Ft}\left[iz^n,P_k(\underline x)\right](x_0,\underline x)\\
=(-1)^{k+\frac{m-1}{2}}(2k+m-1)!!c_{2,n}\mathsf{CK}\left[\frac{\underline x^{n-(2k+m-2)}}{r}P_k(\underline x)\right](x_0,\underline x),
\end{multline*}
and
\begin{align*}
\mathsf{Ft}\left[z^{-n},P_k(\underline x)\right](x_0,\underline x)&=(2k+m-1)!!c_{3,n}\mathsf{CK}\left[\underline x^{-(n+2k+m-1)}P_k(\underline x)\right](x_0,\underline x),\\
\mathsf{Ft}\left[iz^{-n},P_k(\underline x)\right](x_0,\underline x)&=(2k+m-1)!!c_{4,n}\mathsf{CK}\left[\frac{\underline x^{-(n+2k+m-2)}}{r}P_k(\underline x)\right](x_0,\underline x),
\end{align*}
with
\[c_{1,n}=\frac{(-2)^{k+\frac{m-1}{2}}(2k+m-1)!!\left\lfloor\frac{n}{2}\right\rfloor!}{\left(\left\lfloor\frac{n}{2}\right\rfloor-\left(k+\frac{m-1}{2}\right)\right)!},\;\;c_{2,n}=\left\{\begin{array}{ll}\prod_{j=1}^{k+\frac{m-1}{2}}(n-(2j-1)),&n\;\text{even}\\\prod_{j=0}^{k+\frac{m-3}{2}}(n-2j),&n\;\text{odd}\end{array}\right.\]
\[c_{3,n}=\left\{\begin{array}{ll}\displaystyle{\frac{(n+2k+m-3)!!}{(n-2)!!}},&n\;\text{even}\\\displaystyle{\frac{(n+2k+m-2)!!}{(n-1)!!}},&n\;\text{odd}\end{array}\right.,\;\;c_{4,n}=\left\{\begin{array}{ll}\displaystyle{\frac{(n+2k+m-2)!!}{(n-1)!!}},&n\;\text{even}\\\displaystyle{\frac{(n+2k+m-3)!!}{(n-2)!!}},&n\;\text{odd}\end{array}\right.\]
and where $\lfloor\cdot\rfloor$ denotes the floor function.
\end{lem}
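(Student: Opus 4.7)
The approach is to apply the explicit formula (\ref{goodidea}) and exploit uniqueness of the CK-extension. Both $\mathsf{Ft}[h(z),P_k]$, for $h$ any of $z^{\pm n}, iz^{\pm n}$, and each candidate $\mathsf{CK}[g(\underline x)]$ appearing on the right-hand side are monogenic in a common normal neighborhood of the slice $\{x_0=0\}$ (for negative $n$, the neighborhood avoids the axis $\underline x=0$). Since two monogenic functions agreeing on this slice coincide, it suffices to verify the four identities at $x_0=0$.

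Set $M=k+(m-1)/2$, which is a nonnegative integer because $m$ is odd. For $h(z)=z^n$, decompose $(x_0+ir)^n=u(x_0,r)+iv(x_0,r)$ by the binomial theorem; only even powers of $r$ appear in $u$ and only odd powers in $v$, so at $x_0=0$ exactly one term survives in each sum. Since $D_r(M)$ and $D^r(M)$ involve only $\partial_r$, they commute with setting $x_0=0$, and using (\ref{id1})--(\ref{id2}), or equivalently the elementary identity $(r^{-1}\partial_r)^M r^a=a(a-2)\cdots(a-2M+2)\,r^{a-2M}$, one evaluates $D_r(M)[u](0,r)$ and $D^r(M)[v](0,r)$ in closed form. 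The answer vanishes unless $n\ge 2M=2k+m-1$, because otherwise the factor $\ell!/(\ell-M)!$ with $\ell=\lfloor n/2\rfloor$ is zero. When $n\ge 2k+m-1$, the identities $\underline x^{2j}=(-r^2)^j$ and $\underline\omega\,r^{2j+1}=(-1)^j\underline x^{2j+1}$ convert the monomial in $r$ into $c_{1,n}\,\underline x^{n-(2k+m-1)}P_k(\underline x)$, which is precisely the trace at $x_0=0$ of $\mathsf{CK}[\underline x^{n-(2k+m-1)}P_k(\underline x)]$. This proves the first identity.

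The remaining three are handled by the same recipe. Passing from $z^n$ to $iz^n$ swaps the roles of $u$ and $v$ (up to a sign), shifting the parity of the surviving monomial in $r$ by one; the factor $1/r$ appearing in the statement then arises in the final conversion of $r^{2\ell-2M}$ back to $\underline x^{n-(2k+m-2)}$, and the coefficient collapses to $c_{2,n}$. For $z^{-n}$ and $iz^{-n}$ one uses the trace $z^{-n}\big|_{x_0=0}=i^{-n}r^{-n}$ directly and the analogues $(r^{-1}\partial_r)^M r^{-2\ell}$, $(\partial_r r^{-1})^M r^{-2\ell}$, whose values are products of arithmetic progressions naturally collected into the double factorials appearing in $c_{3,n}$ and $c_{4,n}$. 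The only real obstacle is the bookkeeping of signs --- coming from the powers of $i$, from $\underline x^2=-r^2$, and from the alternating signs in $a_{j,n}$, $b_{j,n}$ --- across the four cases, each subdivided by the parity of $n$; no conceptual ingredient beyond (\ref{goodidea}) and uniqueness of the CK-extension is required.
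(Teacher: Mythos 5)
Your proposal matches the paper's own proof: both apply the explicit formula (\ref{goodidea}), restrict to $x_0=0$ where only one binomial term of $u$ or $v$ survives, evaluate $D_r$ and $D^r$ on powers of $r$ (your closed form $(r^{-1}\partial_r)^M r^a=a(a-2)\cdots(a-2M+2)\,r^{a-2M}$ is just the ``direct computation'' the paper invokes via (\ref{id1})--(\ref{id2})), convert powers of $r$ back to powers of $\underline x$, and conclude by uniqueness of the CK-extension. The argument is correct and essentially identical in structure to the paper's.
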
  
\begin{proof} 
For $h(z)=z^n$ we clearly have that 
\begin{align*}
u(x,y)&=\sum_{j=0}^{\lfloor n/2\rfloor}(-1)^j\binom{n}{2j}x^{n-2j}y^{2j},\\
v(x,y)&=\sum_{j=0}^{\lfloor(n-1)/2\rfloor}(-1)^j\binom{n}{2j+1}x^{n-(2j+1)}y^{2j+1}.
\end{align*}
Using (\ref{goodidea}) it thus follows that
\[\frac{\mathsf{Ft}\left[z^n,P_k(\underline x)\right](x_0,\underline x)\vert_{x_0=0}}{(2k+m-1)!!}=\left\{\begin{array}{ll}\underline\omega^nD_r\left(k+\frac{m-1}{2}\right)[r^n]P_k(\underline x),&n\;\text{even}\\\underline\omega^nD^r\left(k+\frac{m-1}{2}\right)[r^n]P_k(\underline x),&n\;\text{odd.}\end{array}\right.\]
A direct computation then shows that
\begin{multline*}
\frac{\mathsf{Ft}\left[z^n,P_k(\underline x)\right](x_0,\underline x)\vert_{x_0=0}}{(2k+m-1)!!}\\
=\left\{\begin{array}{ll}0,&n<2k+m-1\\\frac{(-2)^{k+\frac{m-1}{2}}\left\lfloor\frac{n}{2}\right\rfloor!}{\left(\left\lfloor\frac{n}{2}\right\rfloor-\left(k+\frac{m-1}{2}\right)\right)!}\,\underline x^{n-(2k+m-1)}P_k(\underline x),&n\ge2k+m-1\end{array}\right.
\end{multline*}
which in view of the CK-extension theorem proves the first equality. The others may be proved in a similar way.
\end{proof}

\section{Generating function for Clifford-Hermite polynomials}\label{sect3}

Consider the holomorphic function
\[h(z)=\exp\left(z^2\right)=\exp\left(x^2-y^2\right)\left(\cos(2xy)+i\sin(2xy)\right).\]
We shall prove that $\mathsf{Ft}\left[\exp\left(z^2\right),P_k(\underline x)\right]$ equals (up to a multiplicative constant) the CK-extension of $\exp\left(-\vert\underline x\vert^2\right)P_k(\underline x)$.

\begin{thm}\label{teoGFHerm}
If $m$ is odd, then a closed formula for the CK-extension of $\exp\left(-\vert\underline x\vert^2\right)P_k(\underline x)$ is given by
\begin{multline*}
\mathsf{CK}\left[\exp\left(-\vert\underline x\vert^2\right)P_k(\underline x)\right](x_0,\underline x)=\frac{\mathsf{Ft}\left[\exp\left(z^2\right),P_k(\underline x)\right](x_0,\underline x)}{(-2)^{k+\frac{m-1}{2}}(2k+m-1)!!}\\
=\exp\left(x_0^2-r^2\right)\left(\sum_{j=0}^{k+\frac{m-1}{2}}\binom{k+\frac{m-1}{2}}{j}(-2)^{-j}D_r(j)[\cos(2x_0r)]\right.\\
\left.+\underline\omega\sum_{j=0}^{k+\frac{m-1}{2}}\binom{k+\frac{m-1}{2}}{j}(-2)^{-j}D^r(j)[\sin(2x_0r)]\right)P_k(\underline x).
\end{multline*}
\end{thm}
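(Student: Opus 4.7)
The plan is to establish the two claimed equalities independently, after which they combine to give the theorem. For the \emph{closed-form} expression (the second equality), I would substitute $h(z)=\exp(z^2)$ directly into formula (\ref{goodidea}), which produces $u(x_0,r)=\exp(x_0^2-r^2)\cos(2x_0r)$ and $v(x_0,r)=\exp(x_0^2-r^2)\sin(2x_0r)$. Since $\exp(x_0^2)$ does not depend on $r$, it commutes with $D_r(\cdot)$ and $D^r(\cdot)$ and factors out. The key observation is that $D_r(j)[\exp(-r^2)]=(-2)^j\exp(-r^2)$, which follows by iteration from $r^{-1}\partial_r\exp(-r^2)=-2\exp(-r^2)$. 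Applying the Leibniz rules (\ref{lrD1}) and (\ref{lrD2}) with $f=\exp(-r^2)$ and $g=\cos(2x_0r)$ (respectively $g=\sin(2x_0r)$), pulling $\exp(-r^2)$ out of the sum, and extracting $(-2)^{k+(m-1)/2}$ from the outer index yields the closed form after division by $(-2)^{k+(m-1)/2}(2k+m-1)!!$.

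For the \emph{first} equality, I would expand $\exp(z^2)=\sum_{n=0}^\infty z^{2n}/n!$, exploit the $\mathbb{R}$-linearity of $\mathsf{Ft}[\,\cdot\,,P_k(\underline x)]$, and apply Lemma~\ref{fident} termwise. Only even powers of $z$ occur, so just the first case of the lemma is needed; writing $\ell=k+\tfrac{m-1}{2}$, the terms with $n<\ell$ vanish, and for $n\geq\ell$ the coefficient $c_{1,2n}/n!$ collapses to $(-2)^\ell(2k+m-1)!!/(n-\ell)!$. The index shift $j=n-\ell$ turns the exponent $2n-(2k+m-1)$ into $2j$, and the Clifford relation $\underline x^{2j}=(-|\underline x|^2)^j$ lets me recognise the resulting series as the CK-extension of $\exp(-|\underline x|^2)P_k(\underline x)$, giving
\[
\mathsf{Ft}\bigl[\exp(z^2),P_k(\underline x)\bigr]=(-2)^{\ell}(2k+m-1)!!\,\mathsf{CK}\bigl[\exp(-|\underline x|^2)P_k(\underline x)\bigr].
\]
Exchanging $\mathsf{CK}$ with the infinite sum is justified by the fact that $\exp(-|\underline x|^2)P_k(\underline x)$ is entire in $\underline x$, so (\ref{CKf}) converges on a full normal neighborhood of $\mathbb{R}^m$.

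The main obstacle is the algebraic bookkeeping in the first equality: one must verify that the double factorials hidden in $c_{1,2n}$ telescope to exactly $(-2)^{\ell}(2k+m-1)!!/(n-\ell)!$ and that the re-indexing $j=n-\ell$ correctly synchronises the Clifford power $\underline x^{2n-(2k+m-1)}$ with the Taylor coefficients of $\exp(-|\underline x|^2)$. The Leibniz-rule computation underlying the second equality, by comparison, is purely mechanical once $D_r(j)[\exp(-r^2)]=(-2)^j\exp(-r^2)$ is in hand, so the substantive content of the proof lies in the clean cancellation supplied by Lemma~\ref{fident}.
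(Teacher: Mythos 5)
Your derivation of the closed form (the second equality) is exactly the paper's: substitute $h(z)=\exp(z^2)$ into (\ref{goodidea}), use $D_r(j)[\exp(x_0^2-r^2)]=(-2)^j\exp(x_0^2-r^2)$, and split with the Leibniz rules (\ref{lrD1})--(\ref{lrD2}). Where you genuinely diverge is the first equality, i.e.\ the identification $\mathsf{Ft}[\exp(z^2),P_k]=(-2)^{k+\frac{m-1}{2}}(2k+m-1)!!\,\mathsf{CK}[\exp(-|\underline x|^2)P_k]$. You obtain it by expanding $\exp(z^2)=\sum_n z^{2n}/n!$, applying Lemma \ref{fident} termwise (your bookkeeping is right: $c_{1,2n}/n!=(-2)^{\ell}(2k+m-1)!!/(n-\ell)!$ with $\ell=k+\frac{m-1}{2}$, and $\underline x^{2j}=(-|\underline x|^2)^j$ reassembles the exponential), which is precisely the strategy the paper reserves for the Gegenbauer case in Theorem \ref{FvG}; so your argument has the virtue of treating Sections \ref{sect3} and \ref{sect4} uniformly. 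The paper instead never expands $\exp(z^2)$: having the closed form in hand, it computes $D_r(n)[\cos(2x_0r)]$ and $D^r(n)[\sin(2x_0r)]$ explicitly via (\ref{id1})--(\ref{id2}), observes that at $x_0=0$ every term vanishes except the $j=0$ cosine term, so that $\mathsf{Ft}[\exp(z^2),P_k](0,\underline x)=(-2)^{k+\frac{m-1}{2}}(2k+m-1)!!\exp(-|\underline x|^2)P_k(\underline x)$, and then invokes uniqueness in the CK-extension theorem. The paper's route buys you freedom from any interchange of $\mathsf{Ft}$ (an application of $\Delta^{\ell}$) and $\mathsf{CK}$ with an infinite series, since only the restriction to $x_0=0$ of an already-established closed formula is needed; your route needs those termwise interchanges, which you only partially address (you justify swapping $\mathsf{CK}$ with the sum but not swapping $\Delta^{\ell}$ with the Taylor series of $\exp(z^2)$ — standard for locally uniformly convergent power series, and no worse than what the paper itself does in Theorem \ref{FvG}, but worth stating). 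Both arguments are correct.
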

\begin{proof} 
From (\ref{goodidea}) we have that
\begin{multline*}
\frac{\mathsf{Ft}\left[\exp\left(z^2\right),P_k(\underline x)\right](x_0,\underline x)}{(2k+m-1)!!}=\left(D_r\left(k+\frac{m-1}{2}\right)\left[\exp\left(x_0^2-r^2\right)\cos(2x_0r)\right]\right.\\
\left.+\underline\omega\,D^r\left(k+\frac{m-1}{2}\right)\left[\exp\left(x_0^2-r^2\right)\sin(2x_0r)\right]\right)P_k(\underline x).
\end{multline*}
Note that 
\[D_r(n)\left[\exp\left(x_0^2-r^2\right)\right]=(-2)^n\exp\left(x_0^2-r^2\right)\]
and using (\ref{lrD1}) and (\ref{lrD2}) we thus obtain
\begin{multline*}
D_r(n)\left[\exp\left(x_0^2-r^2\right)\cos(2x_0r)\right]\\
=\exp\left(x_0^2-r^2\right)\sum_{j=0}^n\binom{n}{j}(-2)^{n-j}D_r(j)[\cos(2x_0r)],
\end{multline*}
\begin{multline*}
D^r(n)\left[\exp\left(x_0^2-r^2\right)\sin(2x_0r)\right]\\
=\exp\left(x_0^2-r^2\right)\sum_{j=0}^n\binom{n}{j}(-2)^{n-j}D^r(j)[\sin(2x_0r)].
\end{multline*}
Therefore
\begin{multline*}
\frac{\mathsf{Ft}\left[\exp\left(z^2\right),P_k(\underline x)\right](x_0,\underline x)}{(2k+m-1)!!}\\
=\exp\left(x_0^2-r^2\right)\left(\sum_{j=0}^{k+\frac{m-1}{2}}\binom{k+\frac{m-1}{2}}{j}(-2)^{k+\frac{m-1}{2}-j}D_r(j)[\cos(2x_0r)]\right.\\
\left.+\underline\omega\sum_{j=0}^{k+\frac{m-1}{2}}\binom{k+\frac{m-1}{2}}{j}(-2)^{k+\frac{m-1}{2}-j}D^r(j)[\sin(2x_0r)]\right)P_k(\underline x),
\end{multline*}
where $D_r(n)\left[\cos(2x_0r)\right]$ and $D^r(n)\left[\sin(2x_0r)\right]$ can be computed using (\ref{id1}) and (\ref{id2}) and are equal to  
\begin{multline*}
D_r(n)\left[\cos(2x_0r)\right]=\sum_{j=1}^n(-1)^{n+j}\frac{(2n-j-1)!}{2^{n-2j}(n-j)!(j-1)!}\frac{x_0^j}{r^{2n-j}}\cos(2x_0r+j\pi/2),
\end{multline*}
\begin{multline*}
D^r(n)\left[\sin(2x_0r)\right]=\sum_{j=0}^n(-1)^{n+j}\frac{(2n-j)!}{2^{n-2j}(n-j)!j!}\frac{x_0^j}{r^{2n-j}}\sin(2x_0r+j\pi/2).
\end{multline*}
In view of the above equalities, we get that
\[\mathsf{Ft}\left[\exp\left(z^2\right),P_k(\underline x)\right](0,\underline x)=(-2)^{k+\frac{m-1}{2}}(2k+m-1)!!\exp\left(-\vert\underline x\vert^2\right)P_k(\underline x).\]
The proof follows using the CK-extension theorem.
\end{proof}

\begin{rem}
For $k=0$, this result was previously established in \cite{DS1}.
\end{rem}

\noindent If, for instance, we take $m=3$ and $P_k(\underline x)=1$ we then can assert that
\begin{multline*}
\mathsf{CK}\left[\exp\left(-\vert\underline x\vert^2\right)\right](x_0,\underline x)=\exp\left(x_0^2-r^2\right)\bigg(\cos(2x_0r)+\frac{x_0}{r}\sin(2x_0r)\\
+\underline\omega\left(\sin(2x_0r)+\frac{\sin(2x_0r)}{2r^2}-\frac{x_0}{r}\cos(2x_0r)\right)\bigg).
\end{multline*}
As a consequence we now obtain the following identities for Laguerre polynomials. They should be compared with the standard generating function
\[(1-t)^{-\alpha-1}\exp\left(-xt/(1-t)\right) = \sum_{n=0}^{\infty} t^n L_n^{(\alpha)}(x),\]
see e.g. \cite{AAR}.

\begin{cor}
For $m$ odd the following identities hold
\begin{align*}
{\rm(i)} \sum_{n=0}^{k+\frac{m-1}{2}}\binom{k+\frac{m-1}{2}}{n}(-2)^{-n}&D_r(n)[\cos(2x_0r)]\\
&=\exp\left(-x_0^2\right)\sum_{n=0}^\infty\frac{2^{2n}n!x_0^{2n}}{(2n)!}L_n^{\left(k+\frac{m}{2}-1\right)}\left(r^2\right),\\
{\rm(ii)}\sum_{n=0}^{k+\frac{m-1}{2}}\binom{k+\frac{m-1}{2}}{n}(-2)^{-n}&D^r(n)[\sin(2x_0r)]\\
&=\exp\left(-x_0^2\right)\sum_{n=0}^\infty\frac{2^{2n+1}n!x_0^{2n+1}}{(2n+1)!}\,rL_n^{\left(k+\frac{m}{2}\right)}\left(r^2\right).
\end{align*}
\end{cor}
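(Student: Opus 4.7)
The plan is to compare the two available representations of $\mathsf{CK}\left[\exp\left(-\vert\underline x\vert^2\right)P_k(\underline x)\right](x_0,\underline x)$: the closed form furnished by Theorem \ref{teoGFHerm} on the one hand, and the power series representation \eqref{CKHerm} on the other. The two identities (i) and (ii) will drop out as the scalar part and the $\underline\omega$-part of this comparison.

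First I would start from \eqref{CKHerm} and substitute the explicit formulas \eqref{ExplHerm}. Splitting the series according to the parity of $n$ and using $\underline x = r\underline\omega$ in the odd terms, one obtains
\begin{align*}
\mathsf{CK}\left[\exp\left(-\vert\underline x\vert^2\right)P_k(\underline x)\right](x_0,\underline x)
&=\exp(-r^2)\sum_{n=0}^\infty\frac{x_0^{2n}}{(2n)!}2^{2n}n!L_n^{\left(k+\frac{m}{2}-1\right)}(r^2)P_k(\underline x)\\
&\quad+\exp(-r^2)\underline\omega\sum_{n=0}^\infty\frac{x_0^{2n+1}}{(2n+1)!}2^{2n+1}n!\,rL_n^{\left(k+\frac{m}{2}\right)}(r^2)P_k(\underline x).
\end{align*}
The decomposition into a part without $\underline\omega$ and a part carrying $\underline\omega$ is legitimate because $P_k(\underline x)$ may be assumed scalar (or, more formally, because the identification is between polynomial expressions in the independent real variables $x_0$ and $r$ that occur on both sides once the common factor $P_k(\underline x)$ is cancelled).

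Next I would rewrite the closed form from Theorem \ref{teoGFHerm} by moving the prefactor $\exp(x_0^2-r^2)$ so that only $\exp(-r^2)$ remains on the right, and compare the resulting expression term-by-term with the series above. Cancelling the common factors $\exp(-r^2)P_k(\underline x)$ and matching the scalar parts gives
\[
\exp(x_0^2)\sum_{n=0}^{k+\frac{m-1}{2}}\binom{k+\frac{m-1}{2}}{n}(-2)^{-n}D_r(n)[\cos(2x_0r)]=\sum_{n=0}^\infty\frac{2^{2n}n!x_0^{2n}}{(2n)!}L_n^{\left(k+\frac{m}{2}-1\right)}(r^2),
\]
which after multiplication by $\exp(-x_0^2)$ yields (i). Matching the coefficients of $\underline\omega$ and proceeding identically produces (ii).

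The main thing to be careful about is the legitimacy of separating the scalar and $\underline\omega$-parts, but since $\underline\omega^2=-1$ and the two expansions are polynomial in $x_0$ and $r$ (with operators $D_r(j)$ and $D^r(j)$ producing rational functions of $r$ only), the identification is unambiguous. No actual manipulation of Laguerre polynomials is needed — the nontrivial work has already been absorbed into Theorem \ref{teoGFHerm} and the identities \eqref{ExplHerm}. Hence the corollary is essentially a bookkeeping consequence, and the expected main obstacle is purely notational: carefully tracking the $\exp(x_0^2-r^2)$ prefactor and the factor $r$ that appears when $\underline x=r\underline\omega$ is pulled out of the odd-indexed Hermite terms.
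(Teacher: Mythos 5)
Your proposal is correct and follows exactly the route the paper takes: its proof of the corollary is precisely the comparison of the series representation \eqref{CKHerm} with the explicit expressions \eqref{ExplHerm} against the closed form of Theorem \ref{teoGFHerm}, separating the scalar and $\underline\omega$ parts. Your bookkeeping of the $\exp(x_0^2-r^2)$ prefactor and the factor $r$ from the odd terms is accurate, so nothing is missing.
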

\begin{proof} 
The identities easily follow by using (\ref{CKHerm}), (\ref{ExplHerm}) and Theorem \ref{teoGFHerm}.
\end{proof}

\section{Generating function for Clifford-Gegenbauer polynomials}\label{sect4}

In this section we aim at obtaining a closed formula for $\mathsf{CK}\left[\left(1-\vert\underline x\vert^2\right)^\alpha P_k(\underline x)\right]$. But first we need to find the relationship between this generating function and Fueter's theorem, which is given in the next theorem.   

\begin{thm}\label{FvG}
For $m$ odd, it holds that 
\begin{multline*}
\mathsf{Ft}\left[\left(1+z^2\right)^{\alpha+k+\frac{m-1}{2}},P_k(\underline x)\right](x_0,\underline x)=(-2)^{k+\frac{m-1}{2}}(2k+m-1)!!\\
\times\left(\prod_{j=1}^{k+\frac{m-1}{2}}(\alpha+j)\right)\mathsf{CK}\left[\left(1-\vert\underline x\vert^2\right)^\alpha P_k(\underline x)\right](x_0,\underline x).
\end{multline*}
\end{thm}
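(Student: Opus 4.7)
The plan is to reduce the identity to a statement at $x_0=0$, using the fact that both sides of the claimed equation are monogenic functions of $(x_0,\underline x)$. On the right-hand side, $\mathsf{CK}\left[(1-\vert\underline x\vert^2)^\alpha P_k(\underline x)\right]$ is monogenic by the CK-extension theorem (applied on the open ball $\vert\underline x\vert<1$). On the left-hand side, $\mathsf{Ft}\left[(1+z^2)^{\alpha+k+(m-1)/2},P_k(\underline x)\right]$ is monogenic by Fueter's theorem (the function $(1+z^2)^{\alpha+k+(m-1)/2}$, defined via the principal branch, is holomorphic in a neighborhood of the open upper half of the unit disk, where $1+z^2$ stays in the right half-plane). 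Since the CK-extension is the unique monogenic function with a given analytic restriction at $x_0=0$, it suffices to verify the equality at $x_0=0$ (up to the stated constant).

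To evaluate the left-hand side at $x_0=0$, I would apply formula (\ref{goodidea}) with $h(z)=(1+z^2)^{\alpha+N}$, where $N=k+\frac{m-1}{2}$. Writing $h(x_0+ir)=u(x_0,r)+iv(x_0,r)$ and specialising to $x_0=0$, we get $h(ir)=(1-r^2)^{\alpha+N}$, which is real, so
\begin{equation*}
u(0,r)=(1-r^2)^{\alpha+N},\qquad v(0,r)=0.
\end{equation*}
Because the operators $D_r(N)$ and $D^r(N)$ act only in $r$, the $v$-term in (\ref{goodidea}) vanishes identically on the slice $x_0=0$. It remains to compute $D_r(N)\bigl[(1-r^2)^{\alpha+N}\bigr]$. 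A one-line induction based on the identity
\begin{equation*}
r^{-1}\partial_r\bigl[(1-r^2)^\beta\bigr]=-2\beta\,(1-r^2)^{\beta-1}
\end{equation*}
yields
\begin{equation*}
D_r(N)\bigl[(1-r^2)^{\alpha+N}\bigr]=(-2)^N\prod_{j=1}^{N}(\alpha+j)\,(1-r^2)^{\alpha}.
\end{equation*}
Plugging back into (\ref{goodidea}) produces exactly the constant stated in the theorem times $(1-\vert\underline x\vert^2)^\alpha P_k(\underline x)$, and the identity follows.

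The argument is essentially direct, so I do not expect a serious obstacle. The only subtle point is justifying that Fueter's theorem applies to the multivalued function $(1+z^2)^{\alpha+N}$; this is handled by restricting to a simply connected subdomain of the upper half-plane on which $1+z^2$ avoids $(-\infty,0]$ (for instance the open half-disk $\{z:\operatorname{Im} z>0,\,\vert z\vert<1\}$, where $\operatorname{Re}(1+z^2)=1+x^2-y^2>0$), so that the principal branch is well-defined and holomorphic. The corresponding domain $\Omega\subset\mathbb R^{m+1}$ contains a neighbourhood of $\{0\}\times\{\vert\underline x\vert<1\}$, which is precisely where the CK-extension on the right-hand side is defined, allowing the uniqueness comparison to be made.
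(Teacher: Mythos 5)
Your proof is correct, but it follows a different route from the paper's. The paper proves Theorem \ref{FvG} by expanding $\left(1+z^2\right)^{\alpha+k+\frac{m-1}{2}}$ with the general Binomial Theorem, applying Lemma \ref{fident} to each power $z^{2n}$ (so only the terms with $2n\ge 2k+m-1$ survive), and then resumming the resulting series of CK-extensions of $\underline x^{2n}P_k(\underline x)$ back into $\mathsf{CK}\left[\left(1-\vert\underline x\vert^2\right)^\alpha P_k(\underline x)\right]$ via a second application of the Binomial Theorem. You instead evaluate $\mathsf{Ft}\left[\left(1+z^2\right)^{\alpha+N},P_k(\underline x)\right]$ directly on the slice $x_0=0$ using formula (\ref{goodidea}): there $v(0,r)=0$ and $u(0,r)=(1-r^2)^{\alpha+N}$, and the elementary computation $D_r(N)\left[(1-r^2)^{\alpha+N}\right]=(-2)^N\prod_{j=1}^{N}(\alpha+j)\,(1-r^2)^{\alpha}$ gives the stated constant, after which the CK-uniqueness argument finishes the proof. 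This is exactly the strategy the paper itself uses for the Hermite case (Theorem \ref{teoGFHerm}), transplanted to the Gegenbauer weight; it is arguably more elementary, since it avoids Lemma \ref{fident}, the term-by-term application of $\mathsf{Ft}$ to an infinite series, and the rearrangement of the binomial sums, at the price of the branch-cut discussion for $\left(1+z^2\right)^{\alpha+N}$ (which you handle correctly on the upper half-disk, where $\operatorname{Re}(1+z^2)>0$). The paper's series route, by contrast, showcases Lemma \ref{fident} and keeps the argument purely algebraic on the level of power series. One small imprecision on your side, shared with the paper's own use of this technique: the Fueter image is a priori monogenic only for $r>0$, so the domain $\Omega$ is a normal neighbourhood of the punctured ball $\{0<\vert\underline x\vert<1\}$ rather than of the full ball; the uniqueness comparison with the CK-extension should be phrased there, which suffices for the identity.
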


\begin{proof} 
Using the general Binomial Theorem  
\[(1+z)^\alpha=\sum_{n=0}^\infty\binom{\alpha}{n}z^n,\quad\binom{\alpha}{n}=\frac{1}{n!}\prod_{j=0}^{n-1}(\alpha-j),\quad\vert z\vert<1,\]
we obtain
\begin{multline*}
\mathsf{Ft}\left[\left(1+z^2\right)^{\alpha+k+\frac{m-1}{2}},P_k(\underline x)\right](x_0,\underline x)\\
=\sum_{n=0}^\infty\binom{\alpha+k+\frac{m-1}{2}}{n}\mathsf{Ft}\left[z^{2n},P_k(\underline x)\right](x_0,\underline x).
\end{multline*}
By Lemma \ref{fident} and using again the general Binomial Theorem we can then deduce that
\begin{align*}
&\frac{\mathsf{Ft}\left[\left(1+z^2\right)^{\alpha+k+\frac{m-1}{2}},P_k(\underline x)\right](x_0,\underline x)}{(2k+m-1)!!}\\
&=(-2)^{k+\frac{m-1}{2}}\sum_{n=0}^\infty\binom{\alpha+k+\frac{m-1}{2}}{n+k+\frac{m-1}{2}}\frac{\left(n+k+\frac{m-1}{2}\right)!}{n!}\mathsf{CK}\left[\underline x^{2n}P_k(\underline x)\right](x_0,\underline x)\\
&=(-2)^{k+\frac{m-1}{2}}\left(\prod_{j=1}^{k+\frac{m-1}{2}}(\alpha+j)\right)\mathsf{CK}\left[\sum_{n=0}^\infty\binom{\alpha}{n}\underline x^{2n}P_k(\underline x)\right](x_0,\underline x)\\
&=(-2)^{k+\frac{m-1}{2}}\left(\prod_{j=1}^{k+\frac{m-1}{2}}(\alpha+j)\right)\mathsf{CK}\left[\left(1-\vert\underline x\vert^2\right)^\alpha P_k(\underline x)\right](x_0,\underline x),
\end{align*}
which proves the desired result.
\end{proof}

Thus we have reduced the problem of finding the generating function to computing 
\[\mathsf{Ft}\left[\left(1+z^2\right)^{\alpha+k+\frac{m-1}{2}},P_k(\underline x)\right].\]
It is easy to see that if $h(z)=u(x,y)+iv(x,y)$ is a holomorphic function, then
\begin{align*}
\partial_y^nu&=\frac{i^n}{2}(\partial_z^nh+(-1)^n\partial_{\overline z}^n\overline h)\\
\partial_y^nv&=-\frac{i^{n+1}}{2}(\partial_z^nh+(-1)^{n+1}\partial_{\overline z}^n\overline h)
\end{align*}
where $\partial_z=\frac{1}{2}(\partial_x-i\partial_y)$ is the complex derivative. Observe also that
\[\partial_z^n\left(1+z^2\right)^\beta=\left(1+z^2\right)^{\beta-n}Q_n^{(\beta)}(z),\quad\beta\in\mathbb R,\]
where $Q_n^{(\beta)}(z)$ is a polynomial in $z$ with real coefficients which satisfy
\[Q_{n+1}^{(\beta)}(z)=2(\beta-n)zQ_n^{(\beta)}(z)+\left(1+z^2\right)\partial_zQ_n^{(\beta)}(z),\quad Q_0^{(\beta)}(z)=1.\]

\begin{rem}
Note that $Q_n^{(\beta)}(z)$ equals (up to a multiplicative constant) 
\[i^nC_n^{\left(\beta-n+\frac{1}{2}\right)}(iz),\]
where $C_n^{(\beta)}$ are the Gegenbauer polynomials on the real line.
\end{rem}

Using the previous equalities together with Theorem \ref{FvG} as well as identities (\ref{goodidea}), (\ref{id1}) and (\ref{id2}), we arrive at the following result:

\begin{thm}\label{teoGFGegen}
Suppose that $\alpha\in\mathbb R\setminus\left\{-1,-2,\dots,-k-\frac{m-1}{2}\right\}$. If $m$ is odd, then a closed formula for the CK-extension of $\left(1-\vert\underline x\vert^2\right)^\alpha P_k(\underline x)$ is given by 
\begin{multline*}
2^{k+\frac{m+1}{2}}\left(\prod_{n=1}^{k+\frac{m-1}{2}}(\alpha+n)\right)\mathsf{CK}\left[\left(1-\vert\underline x\vert^2\right)^\alpha P_k(\underline x)\right](x_0,\underline x)\\
=\left(\sum_{n=1}^{k+\frac{m-1}{2}}\frac{a_{n,m,k}}{r^{2k+m-n-1}}\left(\left(1+Z^2\right)^{\beta-n}Q_n^{(\beta)}\left(Z\right)+(-1)^{n}\left(1+\overline Z^2\right)^{\beta-n}Q_n^{(\beta)}\left(\overline Z\right)\right)+\right.\\
\left.\underline\omega\sum_{n=0}^{k+\frac{m-1}{2}}\frac{b_{n,m,k}}{r^{2k+m-n-1}}\left(\left(1+Z^2\right)^{\beta-n}Q_n^{(\beta)}(Z)+(-1)^{n+1}\left(1+\overline Z^2\right)^{\beta-n}Q_n^{(\beta)}\left(\overline Z\right)\right)\right)P_k(\underline x),
\end{multline*}
where $\beta=\alpha+k+\frac{m-1}{2}$, $Z=x_0+ir$ and 
\[a_{n,m,k}=\frac{(-i)^{n}(2k+m-n-2)!}{(2k+m-2n-1)!!\,(n-1)!},\;\;b_{n,m,k}=\frac{(-i)^{n+1}(2k+m-n-1)!}{(2k+m-2n-1)!!\,n!}.\]
\end{thm}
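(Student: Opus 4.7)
The plan is to start from Theorem \ref{FvG}, which already reduces the problem of computing $\mathsf{CK}\left[(1-|\underline x|^2)^\alpha P_k(\underline x)\right]$, up to the explicit prefactor $(-2)^{k+(m-1)/2}(2k+m-1)!!\prod_{j=1}^{k+(m-1)/2}(\alpha+j)$, to the evaluation of $\mathsf{Ft}\bigl[(1+z^2)^{\beta},P_k(\underline x)\bigr](x_0,\underline x)$ with $\beta = \alpha + k + (m-1)/2$. Setting $N = k + (m-1)/2$ and applying the explicit Fueter formula (\ref{goodidea}), this latter quantity becomes $(2k+m-1)!!\bigl(D_r(N)[u] + \underline\omega\, D^r(N)[v]\bigr)P_k(\underline x)$, where $u(x_0,r)+iv(x_0,r)$ is the real/imaginary decomposition of $(1+Z^2)^\beta$ at $Z = x_0+ir$.

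Next I would unfold $D_r(N)[u]$ and $D^r(N)[v]$ using (\ref{id1}) and (\ref{id2}), which recasts them as finite sums $\sum_{j} a_{j,N}\,r^{j-2N}\partial_r^j u$ and $\sum_{j} b_{j,N}\,r^{j-2N}\partial_r^j v$. To handle the partial derivatives I would feed in the two identities displayed just before the statement,
\[
\partial_r^{j}u = \tfrac{i^j}{2}\bigl(\partial_z^j h + (-1)^j\partial_{\overline z}^j \overline h\bigr),\qquad \partial_r^{j}v = -\tfrac{i^{j+1}}{2}\bigl(\partial_z^j h + (-1)^{j+1}\partial_{\overline z}^j \overline h\bigr),
\]
and then apply $\partial_z^j(1+z^2)^\beta = (1+z^2)^{\beta-j} Q_j^{(\beta)}(z)$ together with the fact that $Q_j^{(\beta)}$ has real coefficients, so that the conjugate derivative contributes $(1+\overline Z^2)^{\beta-j}Q_j^{(\beta)}(\overline Z)$. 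Evaluated at $Z = x_0+ir$, this produces exactly the pair of terms indexed by $n=j$ in each of the two sums of the theorem, with the $r^{j-2N}$ factor becoming $1/r^{2k+m-n-1}$.

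The last step is bookkeeping: multiply by the left-hand prefactor $2^{k+(m+1)/2}\prod(\alpha+n)$, cancel with the prefactor supplied by Theorem \ref{FvG}, and check that the coefficient of each surviving monomial matches $a_{n,m,k}$ or $b_{n,m,k}$. Using the closed forms $a_{j,N}=(-1)^{N+j}(2N-j-1)!/[2^{N-j}(N-j)!(j-1)!]$ and $b_{j,N}=(-1)^{N+j}(2N-j)!/[2^{N-j}(N-j)!j!]$, together with the crucial identity $(2k+m-2j-1)!! = 2^{N-j}(N-j)!$ (valid because $m$ is odd, so $2N-2j$ is even), the powers of $2$ collapse and the remaining signs regroup as $(-i)^n$ and $(-i)^{n+1}$, exactly as in the announced $a_{n,m,k}$ and $b_{n,m,k}$.

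The hard part is not the structural argument but this sign-and-power-of-$2$ accounting: keeping parallel track of $(-1)^{N+j}$ from the $a_{j,N}$ and $b_{j,N}$, the $i^j$ (respectively $-i^{j+1}$) from the $\partial_r^j u$ (respectively $\partial_r^j v$) formulas, the global $(-2)^{-N}$ inherited from Theorem \ref{FvG}, and the compensating $2^{N+1}$ from the stated multiplier, in such a way that the $\alpha$-dependent product $\prod_{j=1}^N(\alpha+j)$ cancels cleanly. A minor but worth-noting point is that the $j=0$ term is genuinely absent from (\ref{id1}), which is exactly why the theorem's first sum starts at $n=1$ while the $\underline\omega$-sum starts at $n=0$, so the index ranges match without any further adjustment.
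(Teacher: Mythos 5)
Your proposal is correct and follows exactly the route the paper itself indicates (the paper only sketches this proof): reduce via Theorem \ref{FvG}, evaluate $\mathsf{Ft}\bigl[(1+z^2)^{\beta},P_k\bigr]$ through (\ref{goodidea}), expand with (\ref{id1})--(\ref{id2}), and convert $\partial_r^j u$, $\partial_r^j v$ into $\partial_z^j h$, $\partial_{\overline z}^j\overline h$ using the displayed identities and $\partial_z^j(1+z^2)^\beta=(1+z^2)^{\beta-j}Q_j^{(\beta)}(z)$. Your coefficient bookkeeping, including the identity $(2k+m-2j-1)!!=2^{N-j}(N-j)!$ and the observation about the $n=1$ versus $n=0$ starting indices, reproduces the stated $a_{n,m,k}$ and $b_{n,m,k}$ correctly.
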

If, for instance, we take $m=3$ and $P_k(\underline x)=1$ we then can assert that
\begin{multline*}
\mathsf{CK}\left[\left(1-\vert\underline x\vert^2\right)^\alpha\right](x_0,\underline x)=-\frac{i}{2r}\left(Z\left(1+Z^2\right)^\alpha-\overline Z\left(1+\overline Z^2\right)^\alpha\right)\\
-\frac{\underline\omega}{2r}\left(Z\left(1+Z^2\right)^\alpha+\overline Z\left(1+\overline Z^2\right)^\alpha+\frac{i}{2(\alpha+1)r}\left(\left(1+Z^2\right)^{\alpha+1}-\left(1+\overline Z^2\right)^{\alpha+1}\right)\right).
\end{multline*}

Again we find as a consequence the following identities for Jacobi polynomials, which should be compared with the standard generating function from e.g. \cite{AAR} given by
\[2^{\alpha + \beta} R^{-1} (1-t+R)^{-\alpha}(1+t+R)^{-\beta}= \sum_{n=0}^{\infty}t^n P_n^{(\alpha,\beta)} (x), \quad R = (1-2 x t + t^2)^{1/2}.\]

\begin{cor}
For $m$ odd the following identities hold
\begin{align*}
{\rm(i)} &\sum_{n=1}^{k+\frac{m-1}{2}}\frac{a_{n,m,k}}{r^{2k+m-n-1}}\left(\left(1+Z^2\right)^{\beta-n}Q_n^{(\beta)}\left(Z\right)+(-1)^{n}\left(1+\overline Z^2\right)^{\beta-n}Q_n^{(\beta)}\left(\overline Z\right)\right)\\
&=M\sum_{n=0}^\infty\frac{2^{2n}n!(\alpha-n+1)_nx_0^{2n}}{(2n)!}\left(1-r^2\right)^{\alpha-2n}P_n^{\left(k+\frac{m}{2}-1,\alpha-2n\right)}\left(1-2r^2\right),
\end{align*}
\begin{align*}
{\rm(ii)} &\sum_{n=0}^{k+\frac{m-1}{2}}\frac{b_{n,m,k}}{r^{2k+m-n-1}}\left(\left(1+Z^2\right)^{\beta-n}Q_n^{(\beta)}(Z)+(-1)^{n+1}\left(1+\overline Z^2\right)^{\beta-n}Q_n^{(\beta)}\left(\overline Z\right)\right)\\
&=M\sum_{n=0}^\infty\frac{2^{2n+1}n!(\alpha-n)_{n+1}x_0^{2n+1}}{(2n+1)!}\left(1-r^2\right)^{\alpha-2n-1}rP_n^{\left(k+\frac{m}{2},\alpha-2n-1\right)}\left(1-2r^2\right),
\end{align*}
where $M=2^{k+\frac{m+1}{2}}\prod_{n=1}^{k+\frac{m-1}{2}}(\alpha+n)$.
\end{cor}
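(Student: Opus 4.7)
The plan is to match the explicit closed form of the CK-extension provided by Theorem~\ref{teoGFGegen} against the formal generating-function expansion (\ref{CKGegen}). On substituting (\ref{ExplGeg}) into (\ref{CKGegen}), the series splits naturally into even and odd terms in $n$. For $n=2\ell$, the Clifford-Gegenbauer polynomial $C_{2\ell,m}^{(\alpha-2\ell)}(P_k)$ equals $2^{2\ell}\ell!(\alpha-\ell+1)_\ell\,P_\ell^{(k+m/2-1,\alpha-2\ell)}(1-2r^2)\,P_k(\underline x)$, a scalar multiple of $P_k(\underline x)$; for $n=2\ell+1$, the polynomial $C_{2\ell+1,m}^{(\alpha-2\ell-1)}(P_k)$ carries the extra factor $\underline x=r\underline\omega$, yielding $2^{2\ell+1}\ell!(\alpha-\ell)_{\ell+1}\,r\underline\omega\,P_\ell^{(k+m/2,\alpha-2\ell-1)}(1-2r^2)\,P_k(\underline x)$. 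Multiplying (\ref{CKGegen}) through by $M=2^{k+\frac{m+1}{2}}\prod_{n=1}^{k+\frac{m-1}{2}}(\alpha+n)$ and collecting the $(1-r^2)^{\alpha-n}$ factor therefore produces precisely the right-hand sides of (i) and (ii), with the scalar series coming from the even-$n$ contribution and the $\underline\omega$ series from the odd-$n$ contribution.

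On the other hand, Theorem~\ref{teoGFGegen} expresses $M\cdot\mathsf{CK}\left[(1-|\underline x|^2)^\alpha P_k(\underline x)\right]$ as a scalar piece built from the coefficients $a_{n,m,k}$ plus $\underline\omega$ times a scalar piece built from the $b_{n,m,k}$, followed by the common factor $P_k(\underline x)$. Since every element of the form $A+\underline\omega B$ with $A,B$ real-valued decomposes uniquely into its scalar and its $\underline\omega$ components, equating the two expressions for $M\cdot\mathsf{CK}\left[(1-|\underline x|^2)^\alpha P_k(\underline x)\right]$ and cancelling the left-multiplier $P_k(\underline x)$ yields (i) by matching the scalar parts and (ii) by matching the $\underline\omega$ parts.

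No genuine obstacle is expected; this is the direct analogue of the Laguerre corollary and the two displayed identities are essentially a reading-off of the two independent components of one Clifford-valued identity. The only bookkeeping to check is that the Pochhammer symbols $((\alpha-2\ell)+\ell+1)_\ell$ and $((\alpha-2\ell-1)+\ell+1)_{\ell+1}$ arising from (\ref{ExplGeg}) simplify to $(\alpha-\ell+1)_\ell$ and $(\alpha-\ell)_{\ell+1}$ respectively, that the shifted exponents $(1-r^2)^{\alpha-2\ell}$ and $(1-r^2)^{\alpha-2\ell-1}$ on the right match the $(1-|\underline x|^2)^{\alpha-n}$ factor in (\ref{CKGegen}), and that the hypothesis $\alpha\notin\{-1,-2,\dots,-k-\frac{m-1}{2}\}$ (from Theorem~\ref{teoGFGegen}) ensures $M\neq 0$ so that division by $M$ is legitimate.
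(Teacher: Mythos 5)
Your proposal is correct and follows essentially the same route as the paper, whose proof is simply to combine (\ref{CKGegen}), (\ref{ExplGeg}) and Theorem \ref{teoGFGegen}; you merely spell out the even/odd splitting, the Pochhammer simplifications, and the separation into scalar and $\underline\omega$ parts. The only point stated loosely is the cancellation of the (right) factor $P_k(\underline x)$, which is justified since $A+\underline\omega B$ with $A,B$ real is invertible wherever $(A,B)\neq(0,0)$ (as $(A+\underline\omega B)(A-\underline\omega B)=A^2+B^2$), so the scalar coefficients are determined on the dense set where $P_k\neq0$ and hence everywhere by continuity.
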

\begin{proof}
The identities easily follow by using (\ref{CKGegen}), (\ref{ExplGeg}) and Theorem \ref{teoGFGegen}.
\end{proof}

\subsection*{Acknowledgments}

D. Pe\~na Pe\~na acknowledges the support of a Postdoctoral Fellowship funded by the \lq\lq Special Research Fund" (BOF) of Ghent University.

\end{document}